\newcommand {\Real}{\ensuremath{{\mathbb{R}}}}
\newcommand {\Natural}{\ensuremath{{\mathbb{N}}}}
\newcommand {\Complex}{\ensuremath{{\mathbb{C}}}}
\newcommand{\R}{\ensuremath{\mathcal R}}
\newcommand{\setS}{\ensuremath{\mathcal S}}
\newcommand{\U}{\ensuremath{\mathcal U}}
\newcommand{\X}{\ensuremath{\mathcal X}}
\newcommand{\Y}{\ensuremath{\mathcal Y}}
\newcommand{\N}{\ensuremath{\mathcal N}}
\newcommand{\yu}{\ensuremath{{\mathbf{u}}}}
\newcommand{\nal}{{\mbox{\rm null}}}
\newcommand{\xhat}{\hat x}
\newtheorem{theorem}{Theorem}
\newtheorem{algorithm}{Algorithm}
\newtheorem{corollary}{Corollary}
\newtheorem{lemma}{Lemma}
\newtheorem{definition}{Definition}
\newtheorem{remark}{Remark}
\newtheorem{assumption}{Assumption}
\newtheorem{proposition}{Proposition}
\newenvironment{proof}{\noindent {\bf Proof.}}{\hfill \hspace*{1pt}\hfill$\blacksquare$}
\begin{document}

\title{Deadbeat observer: construction via sets}
\author{S. Emre Tuna\footnote{Author is with Department of
Electrical and Electronics Engineering, Middle East Technical
University, 06800 Ankara, Turkey. Email: {\tt
tuna@eee.metu.edu.tr}}} \maketitle

\begin{abstract}
A geometric generalization of discrete-time linear deadbeat
observer is presented. The proposed method to generate a deadbeat
observer for a given nonlinear system is constructive and makes
use of sets that can be computed iteratively. For demonstration,
derivations of observer dynamics are provided for various example
systems. Based on the method, a simple algorithm that computes the
deadbeat gain for a linear system with scalar output is given.
\end{abstract}

\section{Introduction}

Observer design for linear systems is generally acknowledged to be
understood well enough. For discrete-time linear system $x^{+}=Ax$
with output $y=Cx$, Luenberger observer \cite{luenberger64}
dynamics read
\begin{eqnarray}\label{eqn:luenberger}
\xhat^{+}=A\xhat+L(y-C\xhat)
\end{eqnarray}
and designing the observer is nothing but choosing an observer
gain $L$ that places the eigenvalues of matrix $A-LC$ within the
unit circle. Simple and elegant, anyone would hardly doubt that
this construction is {\em the} construction for linear systems.
However, perhaps due arguably to over-elegance of the notation, it
is nontrivial to unearth the true mechanism (if it exists) running
behind Luenberger observer in order to generalize it in some {\em
natural} way for nonlinear systems. In this paper we aim to
provide a geometric interpretation of the righthand side of
\eqref{eqn:luenberger} for the particular case where matrix $A-LC$
is nilpotent, i.e., when the observer is deadbeat. Our
interpretation allows one to construct deadbeat observers for
nonlinear systems provided that certain conditions
(Assumption~\ref{assume:singleton} and
Assumption~\ref{assume:invariance}) hold. We now note and later
demonstrate that when the system is linear those assumptions are
minimal for a deadbeat observer to exist. The literature on
observers accommodates significant results. See, for instance,
\cite{karafyllis07,glad83,moraal95,valcher99,shamma99,fuhrmann06,besancon00,wong04}.

The toy example that we keep in the back of our mind while we
attempt to reach a generalization is the simple case where $A$ is
a rotation matrix in $\Real^{2}$
\begin{eqnarray*}
A =\left[\!\!\begin{array}{rr} \cos\theta &-\sin\theta\\
\sin\theta &\cos\theta
\end{array}\!\!\right]
\end{eqnarray*}
with angle of rotation $\theta$ different from $0$ and $\pi$.
Letting $y=x_{2}$, i.e., $C=[0\ \ 1]$, the deadbeat observer turns
out to be
\begin{eqnarray*}
\xhat^{+}=A\xhat+\left[\!\!\begin{array}{c} \cos2\theta/\sin\theta\\
\sin2\theta/\sin\theta
\end{array}\!\!\right](y-C\xhat)
\end{eqnarray*}
which can be rewritten as
\begin{eqnarray*}
\xhat^{+}=A\left(\xhat+\left[\!\!\begin{array}{c} \cot\theta\\
1
\end{array}\!\!\right](y-C\xhat)\right)
\end{eqnarray*}
Now we state the key observation in this paper: The term in
brackets is the intersection of two equivalence classes (sometimes
called congruence classes \cite{lax96}). Namely,
\begin{eqnarray*}
\xhat+\left[\!\!\begin{array}{c} \cot\theta\\
1
\end{array}\!\!\right](y-C\xhat)=(\xhat+A\,
\nal(C))\cap(x+\nal(C))
\end{eqnarray*}
as shown in Fig.~\ref{fig:intersect}.
\begin{figure}[h]
\begin{center}
\includegraphics[scale=0.6]{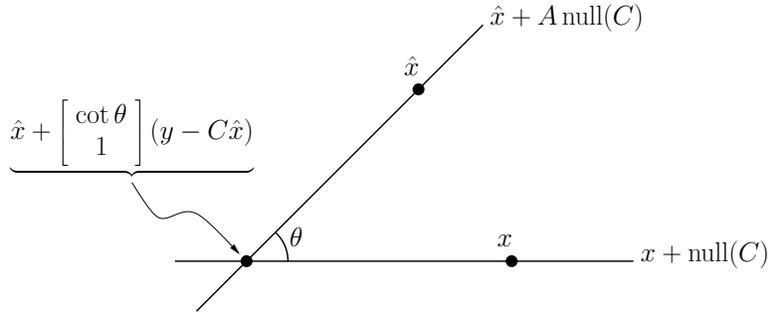}
\caption{Intersection of two equivalence
classes.}\label{fig:intersect}
\end{center}
\end{figure}
Based on this observation, one contribution of this paper is
intended to be in showing that such equivalence classes can be
defined even for nonlinear systems of arbitrary order, which in
turn allows one to construct deadbeat observers. There is another
possible contribution that is of more practical nature: We present
a simple algorithm that computes, for linear systems with scalar
output, deadbeat gain $L$ by iteratively intersecting linear
subspaces. (Devising reliable numerical techniques to compute
deadbeat gain for discrete-time linear systems had once been an
active field of research; see, for instance,
\cite{franklin82,lewis82,sugimoto93}.)

The remainder of the paper is organized as follows. Next section
contains some preliminary material. In Section~\ref{sec:def} we
give the formal problem definition. Section~\ref{sec:sets} is
where we describe the sets that we use in construction of the
deadbeat observer. We state and prove the main result in
Section~\ref{sec:main}. An extension of the main result where we
consider the case with input ($x^{+}=f(x,\,u)$) is in
Section~\ref{sec:input}. We provide examples in
Section~\ref{sec:ex}, where we construct deadbeat observers for
two different third order systems. In Section~\ref{sec:alg} we
present an algorithm to compute the deadbeat observer gain for a
linear system with scalar output.

\section{Preliminaries}

Identity matrix is denoted by $I$. Null space and range space of a
matrix $M\in\Real^{m\times n}$ are denoted by $\N(M)$ and $\R(M)$,
respectively. Given map $\mu:\X\to\Y$, $\mu^{-1}(\cdot)$ denotes
the {\em inverse} map in the general sense that for $y\in\Y$,
$\mu^{-1}(y)$ is the set of all $x\in\X$ satisfying $\mu(x)=y$.
That is, we will not need $\mu$ be bijective when talking about
its inverse. Note that $y\notin\mu(\X)$ will imply
$\mu^{-1}(y)=\emptyset$. Linear maps $x\mapsto Mx$ will not be
exempt from this notation. The reader should not think that $M$ is
a nonsingular matrix when we write $M^{-1}y$. (In our case $M$
need even not be square.) For instance, for $M=[0\ \ 0]$ we have
$M^{-1}y=\emptyset$ for $y\neq 0$ and $M^{-1}0=\Real^{2}$. The set
of nonnegative integers is denoted by $\Natural$ and $\Real_{>0}$
denotes the set of strictly positive real numbers.

\section{Problem definition}\label{sec:def}

Consider the following discrete-time system
\begin{subeqnarray}\label{eqn:system}
x^{+}&=&f(x)\\
y&=&h(x)
\end{subeqnarray}
where $x\in\X\subset\Real^{n}$ is the {\em state}, $x^{+}$ is the
state at the next time instant, and $y\in\Y\subset\Real^{m}$ is
the {\em output} or the {\em measurement}. The {\em solution} of
system~\eqref{eqn:system} at time $k\in\Natural$, starting at
initial condition $x\in\X$ is denoted by $\phi(k,\,x)$. Note that
$\phi(0,\,x)=x$ and $\phi(k+1,\,x)=f(\phi(k,\,x))$ for all $x$ and
$k$.

Now consider the following cascade system
\begin{subeqnarray}
x^{+}&=&f(x)\\
\xhat^{+}&\in&g(\xhat,\,h(x))\label{eqn:cascade}
\end{subeqnarray}
We denote {\em a} solution of subsystem~(\ref{eqn:cascade}b) by
$\psi(k,\,\xhat,\,x)$. We then have $\psi(0,\,\xhat,\,x)=\xhat$
and $\psi(k+1,\,\xhat,\,x)\in
g(\psi(k,\,\xhat,\,x),\,h(\phi(k,\,x)))$ for all $x$, $\xhat$, and
$k$. We now use \eqref{eqn:cascade} to define deadbeat observer.

\begin{definition}
Given $g:\X\times\Y\rightrightarrows\X$, system
\begin{eqnarray*}
\xhat^{+}\in g(\xhat,\,y)
\end{eqnarray*}
is said to be a {\em deadbeat observer for
system~\eqref{eqn:system}} if there exists $p\geq 1$ such that
{\em all} solutions of system~\eqref{eqn:cascade} satisfy
\begin{eqnarray*}
\psi(k,\,\xhat,\,x)=\phi(k,\,x)
\end{eqnarray*}
for all $x,\,\xhat\in\X$ and $k\geq p$.
\end{definition}

\begin{definition}
System~\eqref{eqn:system} is said to be {\em deadbeat observable}
if there exists a deadbeat observer for it.
\end{definition}

In this paper we present a procedure to construct a deadbeat
observer for system~\eqref{eqn:system} provided that certain
conditions (Assumption~\ref{assume:singleton} and
Assumption~\ref{assume:invariance}) hold. Our construction will
make use of some sets, which we define in the next section. Before
moving on into the next section, however, we choose to remind the
reader of a standard fact regarding the observability of linear
systems. Then we provide a Lemma~\ref{lem:subspace} as a geometric
equivalent of that well-known result. Lemma~\ref{lem:subspace}
will find use later when we attempt to interpret and display the
generality of the assumptions we will have made.

The following criterion, known as Popov-Belevitch-Hautus (PBH)
test, is an elegant tool for checking (deadbeat) observability.

\begin{proposition}[PBH test]
The linear system
\begin{subeqnarray}\label{eqn:linsystem}
x^{+}&=&Ax\\
y&=&Cx
\end{subeqnarray}
with $A\in\Real^{n\times n}$ and $C\in\Real^{m\times n}$ is
deadbeat observable if and only if
\begin{eqnarray}\label{eqn:PBH}
{\rm rank}\left[\!\!\begin{array}{cc}A-\lambda I\\
C\end{array}\!\!\right]=n\quad \mbox{for all}\quad \lambda\neq 0
\end{eqnarray}
where $\lambda$ is a complex scalar.
\end{proposition}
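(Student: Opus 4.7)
The plan is to prove the two implications by different means. For necessity (``only if'') I argue by contraposition, using output-indistinguishability of two initial states that differ by a nonzero unobservable eigenvector. For sufficiency (``if'') I construct a gain $L$ via the Kalman observability decomposition and then invoke standard pole placement.

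For necessity, suppose \eqref{eqn:PBH} fails for some $\lambda_{0}\neq 0$, so that there is a nonzero $v\in\Complex^{n}$ with $Av=\lambda_{0}v$ and $Cv=0$. Passing to real and imaginary parts, I would extract a nonzero real vector $w\in\Real^{n}$ lying in an $A$-invariant real subspace on which every eigenvalue of $A$ is nonzero and on which $C$ vanishes. Consequently $CA^{k}w=0$ and $A^{k}w\neq 0$ for every $k\in\Natural$. Hence the initial states $x$ and $x+w$ produce identical output sequences under \eqref{eqn:linsystem}, and so for any candidate $g$ and any common initial estimate $\xhat$ the set of admissible trajectories of subsystem~(\ref{eqn:cascade}b) coincides for the two starting states. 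The deadbeat requirement would then force $A^{p}x=\psi(p,\xhat,x)=\psi(p,\xhat,x+w)=A^{p}(x+w)$ for some $p\geq 1$, contradicting $A^{p}w\neq 0$.

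For sufficiency, I would invoke the Kalman observability decomposition to bring $(A,C)$ into block-triangular form with an observable pair $(A_{o},C_{o})$ on top of a block $A_{\bar o}$ whose spectrum is exactly the set of $\lambda$ failing \eqref{eqn:PBH}. Under the hypothesis this spectrum is $\{0\}$, so $A_{\bar o}$ is nilpotent. By the pole placement theorem applied to the observable pair $(A_{o},C_{o})$, there exists $L_{o}$ making $A_{o}-L_{o}C_{o}$ nilpotent; extending $L_{o}$ by zeros in the new coordinates and transforming back yields $L$ for which $A-LC$ is block-triangular with nilpotent diagonal blocks, hence nilpotent itself. The Luenberger observer \eqref{eqn:luenberger} with this $L$ is then deadbeat.

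The main obstacle lies in the necessity direction: the observer $g$ in the definition is set-valued and not assumed to be of Luenberger form, so one cannot simply analyze eigenvalues of $A-LC$. The argument above sidesteps this by relying only on the fact that $g$ sees the plant solely through $y$, so two states producing identical output sequences are truly indistinguishable to every candidate observer, regardless of the form of $g$.
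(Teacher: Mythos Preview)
The paper does not actually prove this proposition: it is introduced with the words ``we choose to remind the reader of a standard fact'' and is stated without proof, serving only as the classical characterization against which Lemma~\ref{lem:subspace} is then shown to be equivalent. So there is no ``paper's own proof'' to compare your attempt to.

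Judged on its own merits, your argument is essentially correct in both directions. In the necessity direction you correctly exploit that the definition of deadbeat observer is output-driven: two initial states producing identical output sequences generate the same set of admissible observer trajectories, regardless of whether $g$ is single- or set-valued, Luenberger or not. The passage from a complex eigenvector $v$ with $Av=\lambda_{0}v$, $Cv=0$, $\lambda_{0}\neq 0$, to a real $w$ with $CA^{k}w=0$ and $A^{k}w\neq 0$ for all $k$ is standard and your sketch of it is adequate. In the sufficiency direction the Kalman decomposition plus pole placement on the observable part is the textbook route; the only thing worth making explicit is that the eigenvalues of $A_{\bar o}$ are precisely the $\lambda$ at which the PBH rank condition fails, so the hypothesis forces $A_{\bar o}$ nilpotent.

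It may interest you that the paper's own machinery gives an alternative route to sufficiency that avoids pole placement altogether: Lemma~\ref{lem:subspace} shows \eqref{eqn:PBH} is equivalent to $\setS_{n}=\{0\}$, the proof of Theorem~1 (the one following Assumption~\ref{assume:singleton}) shows this in turn is equivalent to Assumption~\ref{assume:singleton}, and Theorem~\ref{thm:main} then produces a deadbeat observer directly from the equivalence classes $[x]_{k}$. That approach is geometric and constructive, whereas yours is algebraic and relies on the pole-placement theorem as a black box; both are valid.
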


The below result is a geometric equivalent of PBH test.

\begin{lemma}\label{lem:subspace}
Given $A\in\Real^{n\times n}$ and $C\in\Real^{m\times n}$, let
subspace $\setS_{k}$ of $\Real^{n}$ be defined as
$\setS_{k}:=A\setS_{k-1}\cap\setS_{0}$ for $k=1,\,2,\,\ldots$ with
$\setS_{0}:=\N(C)$. Then system~\eqref{eqn:linsystem} is deadbeat
observable if and only if
\begin{eqnarray}\label{eqn:SET}
\setS_{n}=\{0\}\,.
\end{eqnarray}
\end{lemma}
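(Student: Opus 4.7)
The plan is to reduce both the rank condition in the PBH proposition and the set condition $\setS_n=\{0\}$ to a single algebraic statement, namely that $A$ restricted to the unobservable subspace $\V:=\bigcap_{j=0}^{n-1}\N(CA^j)$ is nilpotent; the lemma then follows by chaining these two equivalences.

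For the set side, I would first establish by induction on $k$ the identity
\[\setS_k \;=\; A^k\Bigl(\bigcap_{j=0}^{k}\N(CA^j)\Bigr).\]
The base $k=0$ matches the definition, and for the inductive step $v\in\setS_k$ iff $v=Aw$ with $w=A^{k-1}u$ satisfying $CA^ju=0$ for $j=0,\dots,k-1$, together with the extra condition $Cv=CA^ku=0$. By Cayley-Hamilton, every $CA^k$ with $k\geq n$ is a real linear combination of $C,CA,\dots,CA^{n-1}$, so the intersection stabilizes at $\V$ once $k\geq n-1$, giving $\setS_n=A^n\V$. A short check (using Cayley-Hamilton once more at the top index) shows $\V$ is $A$-invariant, so $A^n\V=\{0\}$ is equivalent to $(A|_\V)^n=0$, that is, to $A|_\V$ being nilpotent.

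For the PBH side, I would complexify and observe that the rank in \eqref{eqn:PBH} falls short of $n$ at some $\lambda\in\Complex\setminus\{0\}$ iff there is a nonzero $v\in\Complex^n$ with $Av=\lambda v$ and $Cv=0$; such a $v$ automatically satisfies $CA^jv=\lambda^jCv=0$, so it lies in the complexification of $\V$ and exhibits $\lambda$ as a nonzero eigenvalue of $A|_\V$. Conversely, any nonzero eigenvalue of $A|_\V$ produces such a $v$ (and, by taking real and imaginary parts if needed, one keeps the argument honest over $\Real$). Hence PBH holds for every $\lambda\neq 0$ iff the spectrum of $A|_\V$ equals $\{0\}$ iff $A|_\V$ is nilpotent, which closes the chain of equivalences.

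I expect the main obstacle to be bookkeeping rather than anything conceptual: the inductive formula for $\setS_k$, the Cayley-Hamilton stabilization, the $A$-invariance of $\V$, and the passage from $\Real$ to $\Complex$ when extracting eigenvectors all need to be stated cleanly, but none of the individual steps is deep once the characterization $\setS_n=A^n\V$ is in hand.
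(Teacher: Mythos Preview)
Your argument is correct and follows a genuinely different path from the paper's.

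The paper argues both implications by contrapositive, working directly with the recursion $\setS_{k+1}=A\setS_k\cap\setS_0$ and never writing a closed form for $\setS_k$. For one direction it takes a would-be bad eigenvector $v$ (with $Av=\lambda v$, $\lambda\neq 0$, $Cv=0$) and shows by a one-line induction that $v\in\setS_k$ for every $k$, so $\setS_n\neq\{0\}$. For the other it first establishes the monotonicity $\setS_{k+1}\subset\setS_k$, then uses pigeonhole on dimensions: if $\setS_n\neq\{0\}$ there must be some $\ell\leq n-1$ with $\setS_{\ell+1}=\setS_\ell\neq\{0\}$, which forces $\setS_\ell=A\setS_\ell$; an eigenvector of $A$ on this nonzero invariant subspace (necessarily with $\lambda\neq 0$ and lying in $\setS_0=\N(C)$) then violates PBH.

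Your route instead identifies the object explicitly: the formula $\setS_k=A^k\bigl(\bigcap_{j=0}^k\N(CA^j)\bigr)$ together with Cayley--Hamilton gives $\setS_n=A^n\V$ for the unobservable subspace $\V$, and both the PBH condition and $\setS_n=\{0\}$ collapse to the single statement that $A|_\V$ is nilpotent. This buys a cleaner conceptual picture---the lemma becomes a restatement of ``all unobservable modes are at the origin''---at the price of invoking Cayley--Hamilton, which the paper's dimension-counting argument avoids entirely. Both proofs handle the $\Real$/$\Complex$ issue by complexifying at the eigenvector step, and the paper is equally casual about that passage.
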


\begin{proof}
For simplicity we provide the demonstration for the case where
each $\setS_{k}$ is a subspace of $\Complex^{n}$ (over field
$\Complex$). The case $\setS_{k}\subset\Real^{n}$ is a little
longer to prove yet it is true.

We first show \eqref{eqn:SET}$\implies$\eqref{eqn:PBH}. Suppose
\eqref{eqn:PBH} fails. That is, there exists an eigenvector
$v\in\Complex^{n}$ and a nonzero eigenvalue $\lambda\in\Complex$
such that $Av=\lambda v$ and $Cv=0$. Now suppose for some $k$ we
have $v\in\setS_{k}$. Then, since $v$ is an eigenvector with a
nonzero eigenvalue, we can write $v\in A\setS_{k}$. Observe that
$v\in\setS_{0}$ for $Cv=0$. As a result $v\in
A\setS_{k}\cap\setS_{0}=\setS_{k+1}$. By induction therefore we
have $v\in\setS_{k}$ for all $k$, which means that \eqref{eqn:SET}
fails.

Now we demonstrate the other direction
\eqref{eqn:PBH}$\implies$\eqref{eqn:SET}. We first claim that
$\setS_{k+1}\subset\setS_{k}$ for all $k$. We use induction to
justify our claim. Suppose $\setS_{k+1}\subset\setS_{k}$ for some
$k$. Then we can write
\begin{eqnarray*}
\setS_{k+2}
&=&A\setS_{k+1}\cap\setS_{0}\\
&\subset&A\setS_{k}\cap\setS_{0}\\
&=&\setS_{k+1}\,.
\end{eqnarray*}
Since $\setS_{1}\subset\setS_{0}$ our claim is valid. A trivial
implication of our claim then follows:
$\dim\setS_{k+1}\leq\dim\setS_{k}$ for all $k$. Let us now suppose
\eqref{eqn:SET} fails. That is, $\dim\setS_{n}\geq 1$. Note that
$\dim\setS_{0}\leq n$. Therefore $\dim\setS_{n}\geq 1$ and
$\dim\setS_{k+1}\leq\dim\setS_{k}$ imply the existence of some
$\ell\in\{0,\,1,\,\ldots,\,n-1\}$ such that
$\dim\setS_{\ell+1}=\dim\setS_{\ell}\geq 1$. Since
$\setS_{\ell+1}\subset\setS_{\ell}$, both $\setS_{\ell+1}$ and
$\setS_{\ell}$ having the same dimension implies
$\setS_{\ell+1}=\setS_{\ell}$. Hence we obtained
$\setS_{\ell}=A\setS_{\ell}\cap\setS_{0}$ which allows us to write
$\setS_{\ell}\subset A\setS_{\ell}$. Since
$\dim\setS_{\ell}\geq\dim A\setS_{\ell}$ we deduce that
$\setS_{\ell}=A\setS_{\ell}$. Since $\dim\setS_{\ell}\geq 1$,
equality $A\setS_{\ell}=\setS_{\ell}$ implies that there exists an
eigenvector $v\in\setS_{\ell}$ and a nonzero eigenvalue
$\lambda\in\Complex$ such that $Av=\lambda v$. Note also that
$Cv=0$ because $\setS_{\ell}\subset\setS_{0}$. Hence
\eqref{eqn:PBH} fails.
\end{proof}

\begin{remark}\label{rem:dimension}
It is clear from the proof that if \eqref{eqn:SET} fails then
$\dim\setS_{k}\geq 1$ for all $k$.
\end{remark}

\section{Sets}\label{sec:sets}

In this section we define certain sets (more formally, {\em
equivalence classes}) associated with system~\eqref{eqn:system}.
For $x\in\X$ we define
\begin{eqnarray*}
[x]_{0}:=h^{-1}(h(x))\,.
\end{eqnarray*}
Note that when $h(x)=Cx$, where $C\in\Real^{m\times n}$, we have
$[x]_{0}=x+\N(C)$. We then let for $k=0,\,1,\,\ldots$
\begin{eqnarray*}
[x]_{k+1}:=[x]_{k}^{+}\cap[x]_{0}
\end{eqnarray*}
where
\begin{eqnarray*}
[x]_{k}^{+}:=f([f^{-1}(x)]_{k})\,.
\end{eqnarray*}
Note that $[x]_{k}^{+}=\emptyset$ when $x\notin f(\X)$ since then
$f^{-1}(x)=\emptyset$.

\begin{remark}\label{rem:subset}
Note that $[x]_{k+1}\subset[x]_{k}$ and
$[x]^{+}_{k+1}\subset[x]^{+}_{k}$ for all $x$ and $k$.
\end{remark}

The following two assumptions will be invoked in our main theorem.
In hope of making them appear somewhat meaningful and revealing
their generality we provide the conditions that they would boil
down to for linear systems.

\begin{assumption}\label{assume:singleton}
There exists $p\geq 1$ such that, for each $x\in\X$, set
$[x]_{p-1}$ is either singleton or empty set.
\end{assumption}

Assumption~\ref{assume:singleton} is equivalent to deadbeat
observability for linear systems. Below result formalizes this.

\begin{theorem}
Linear system~\eqref{eqn:linsystem} is deadbeat observable if and
only if Assumption~\ref{assume:singleton} holds.
\end{theorem}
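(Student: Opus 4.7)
The plan is to reduce Assumption~\ref{assume:singleton} to the subspace condition $\setS_{n}=\{0\}$ of Lemma~\ref{lem:subspace} by first showing that, for the linear system~\eqref{eqn:linsystem}, the equivalence class $[x]_{k}$ is exactly the affine set $x+\setS_{k}$ when $x\in\R(A^{k})$ and is empty otherwise. Once this identification is established, both directions of the theorem are immediate corollaries of Lemma~\ref{lem:subspace}.

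The core step is to prove by induction on $k$ that
\[
[x]_{k} \;=\; \begin{cases} x+\setS_{k} & \mbox{if } x\in\R(A^{k})\,,\\ \emptyset & \mbox{otherwise}\,. \end{cases}
\]
The base case is $[x]_{0}=h^{-1}(h(x))=x+\N(C)=x+\setS_{0}$. For the inductive step with $x\in\R(A^{k+1})$, fix any $z_{0}\in\R(A^{k})$ with $Az_{0}=x$, so that $A^{-1}(x)=z_{0}+\N(A)$. Applying the inductive hypothesis to each element of $A^{-1}(x)$ that lies in $\R(A^{k})$ and then using $A\N(A)=\{0\}$, a short computation yields $[x]_{k}^{+}=A([A^{-1}(x)]_{k})=x+A\setS_{k}$. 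Intersecting with $[x]_{0}=x+\N(C)$ and invoking the defining relation $\setS_{k+1}=A\setS_{k}\cap\setS_{0}$ gives $[x]_{k+1}=x+\setS_{k+1}$. The complementary case $x\notin\R(A^{k+1})$ is handled by noting that then no preimage of $x$ lies in $\R(A^{k})$, so $[A^{-1}(x)]_{k}=\emptyset$ and hence $[x]_{k+1}=\emptyset$.

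With this identification in hand the two directions are short. For $(\Leftarrow)$, apply Assumption~\ref{assume:singleton} at the point $x=0$: since $0\in\R(A^{p-1})$ trivially, $[0]_{p-1}=\setS_{p-1}$ is nonempty and therefore a singleton, forcing $\setS_{p-1}=\{0\}$; the contrapositive of Remark~\ref{rem:dimension} then yields $\setS_{n}=\{0\}$, and Lemma~\ref{lem:subspace} delivers deadbeat observability. For $(\Rightarrow)$, Lemma~\ref{lem:subspace} gives $\setS_{n}=\{0\}$, so choosing $p=n+1$ makes every $[x]_{p-1}$ equal to either $\{x\}$ or $\emptyset$, verifying Assumption~\ref{assume:singleton}.

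The main obstacle is the inductive identification of $[x]_{k}$ with $x+\setS_{k}$: one has to treat the set-valued preimage $A^{-1}(\cdot)$ carefully when $A$ is singular and correctly track when the classes turn empty. Using $x=0$ as a canonical nonempty witness in the reverse direction is the clean device that rules out the possibility that Assumption~\ref{assume:singleton} might be ``vacuously satisfied'' by every $[x]_{p-1}$ happening to be empty.
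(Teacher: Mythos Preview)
Your proposal is correct and follows essentially the same route as the paper: both arguments establish by induction the identification $[x]_{k}=x+\setS_{k}$ for $x\in\R(A^{k})$ (empty otherwise) and then invoke Lemma~\ref{lem:subspace} together with Remark~\ref{rem:dimension}, using $x=0$ as the nonempty witness. The only cosmetic difference is that the paper argues the $(\Leftarrow)$ direction by contrapositive (not deadbeat observable $\Rightarrow$ $[0]_{k}$ never singleton) whereas you argue it directly via $\setS_{p-1}=\{0\}$; the underlying content is identical.
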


\begin{proof}
Let $\setS_{k}$ for $k=0,\,1,\,\ldots$ be defined as in
Lemma~\ref{lem:subspace}. Note then that $[x]_{0}=x+\setS_{0}$. We
claim that the following holds
\begin{eqnarray}\label{eqn:equivalence}
[x]_{k}=\left\{\begin{array}{cl} x+\setS_{k}&\quad\mbox{for}\quad
x\in\R(A^{k})\\
\emptyset&\quad\mbox{for}\quad x\notin\R(A^{k})
\end{array}\right.
\end{eqnarray}
for all $k$. We employ induction to establish our claim. Suppose
\eqref{eqn:equivalence} holds for some $k$. Then we can write
\begin{eqnarray*}
[x]_{k}^{+}
&=&A[A^{-1}x]_{k}\\
&=&A[A^{-1}x\cap\R(A^{k})]_{k}\,.
\end{eqnarray*}
Note that $A^{-1}x\cap\R(A^{k})\neq\emptyset$ if and only if
$x\in\R(A^{k+1})$. Since $[x]_{k+1}=[x]_{k}^{+}\cap[x]_{0}$, we
deduce that $[x]_{k+1}=\emptyset$ for $x\notin\R(A^{k+1})$.
Otherwise if $x\in\R(A^{k+1})$ then there exists some
$\eta\in\R(A^{k})$ such that $A\eta=x$. Using this $\eta$ we can
construct the equality $A^{-1}x=\eta+\N(A)$ and we can write
\begin{eqnarray*}
[x]_{k+1}
&=&[x]_{k}^{+}\cap[x]_{0}\\
&=&A[A^{-1}x]_{k}\cap[x]_{0}\\
&=&A[\eta+\N(A)]_{k}\cap[x]_{0}\\
&=&A(\eta+(\N(A)\cap\R(A^{k}))+\setS_{k})\cap[x]_{0}\\
&=&(A\eta+A\setS_{k})\cap(x+S_{0})\\
&=&(x+A\setS_{k})\cap(x+\setS_{0})\\
&=&x+(A\setS_{k}\cap\setS_{0})\\
&=&x+\setS_{k+1}\,.
\end{eqnarray*}
Since \eqref{eqn:equivalence} holds for $k=0$, our claim is valid.

Now suppose that the system is deadbeat observable. Then by
\eqref{eqn:equivalence} we see that
Assumption~\ref{assume:singleton} holds with $p=n+1$ thanks to
Lemma~\ref{lem:subspace}. If however the system is not deadbeat
observable, then by Remark~\ref{rem:dimension} $\dim \setS_{k}\geq
1$ for all $k$. We deduce by \eqref{eqn:equivalence} therefore
that $[0]_{k}$ can never be singleton nor is it empty. Hence
Assumption~\ref{assume:singleton} must fail.
\end{proof}

\begin{assumption}\label{assume:invariance}
Given $x,\,\xhat\in\X$ and $k$; $\xhat\in[x]_{k}^{+}$ implies
$[\xhat]_{k}^{+}=[x]_{k}^{+}$.
\end{assumption}

\begin{theorem}
Assumption~\ref{assume:invariance} comes for free for linear
system~\eqref{eqn:linsystem}.
\end{theorem}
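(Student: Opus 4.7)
The plan is to leverage the affine-coset description of $[x]_{k}$ derived in the proof of the previous theorem and extend it to the shifted sets $[x]_{k}^{+}$.

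First, I would establish the closed form
\begin{eqnarray*}
[x]_{k}^{+}=\left\{\begin{array}{cl} x+A\setS_{k}&\quad\mbox{for}\quad x\in\R(A^{k+1})\\
\emptyset&\quad\mbox{for}\quad x\notin\R(A^{k+1})
\end{array}\right.
\end{eqnarray*}
This is essentially extracted from the intermediate computation in the previous proof: one has $[x]_{k}^{+}=A[A^{-1}x]_{k}$, and when $x\in\R(A^{k+1})$ one may pick $\eta\in\R(A^{k})$ with $A\eta=x$, so that $A^{-1}x=\eta+\N(A)$ and $[A^{-1}x]_{k}=\eta+(\N(A)\cap\R(A^{k}))+\setS_{k}$; applying $A$ collapses the $\N(A)$ piece and yields $x+A\setS_{k}$. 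When $x\notin\R(A^{k+1})$ no such $\eta$ exists and $[A^{-1}x]_{k}=\emptyset$.

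Second, I would record two structural facts about the subspace $A\setS_{k}$: (i) since $A\setS_{k}$ is a linear subspace, $v+A\setS_{k}=A\setS_{k}$ for every $v\in A\setS_{k}$; and (ii) $A\setS_{k}\subset\R(A^{k+1})$, which follows from the routine chain $\setS_{k}\subset A\setS_{k-1}\subset\cdots\subset A^{k}\setS_{0}\subset\R(A^{k})$.

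Third, I would conclude as follows. Suppose $\xhat\in[x]_{k}^{+}$. Nonemptiness forces $x\in\R(A^{k+1})$ and $\xhat=x+v$ for some $v\in A\setS_{k}$. By fact (ii), $v\in\R(A^{k+1})$, so $\xhat\in\R(A^{k+1})$ as well, and the closed form applies at $\xhat$:
\begin{eqnarray*}
[\xhat]_{k}^{+}=\xhat+A\setS_{k}=x+(v+A\setS_{k})=x+A\setS_{k}=[x]_{k}^{+}
\end{eqnarray*}
where the third equality uses fact (i). There is no real obstacle; essentially all the work has been done in the previous theorem, and the remaining content is just translation-invariance of affine cosets of the fixed subspace $A\setS_{k}$. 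The only point requiring a moment of care is verifying that $\xhat$ itself lies in $\R(A^{k+1})$ so that the closed form is legitimately applicable at the shifted base point.
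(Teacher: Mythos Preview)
Your proposal is correct and is exactly the natural elaboration of the paper's own proof, which consists of the single word ``Evident.'' The paper evidently intends the reader to invoke the coset description $[x]_{k}=x+\setS_{k}$ (for $x\in\R(A^{k})$) established in the preceding theorem, and your argument spells this out carefully, including the small but necessary check that $\xhat\in\R(A^{k+1})$ so the closed form applies at the shifted base point.
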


\begin{proof}
Evident.
\end{proof}
\\

Last we let $[x]_{-1}^{+}:=\X$ and define map
$\pi:\X\times\Y\to\{-1,\,0,\,1,\,\ldots,\,p-2\}$ as
\begin{eqnarray*}
\pi(\xhat,\,y) := \max\,\{-1,\,0,\,1,\,\ldots,\,p-2\}\quad
\mbox{subject to}\quad [\xhat]^{+}_{\pi(\xhat,\,y)}\cap h^{-1}(y)
\neq\emptyset
\end{eqnarray*}
where $p$ is as in Assumption~\ref{assume:singleton}.

\section{The result}\label{sec:main}

Below is our main theorem.

\begin{theorem}\label{thm:main}
Suppose Assumptions~\ref{assume:singleton}-\ref{assume:invariance}
hold. Then system
\begin{eqnarray}\label{eqn:deadbeat}
\hat{x}^{+}\in f([\xhat]^{+}_{\pi(\xhat,\,y)}\cap h^{-1}(y))
\end{eqnarray}
is a deadbeat observer for system~\eqref{eqn:system}.
\end{theorem}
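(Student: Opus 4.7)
The strategy is induction on $k$. Writing $x_{k}:=\phi(k,x)$, $\xhat_{k}:=\psi(k,\xhat,x)$, $y_{k}:=h(x_{k})$, and $m_{k}:=\pi(\xhat_{k},y_{k})$, and recalling the convention $[x]^{+}_{-1}:=\X$, I will prove the central

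\textbf{Claim.} \emph{For every $k\ge 0$ and every solution of \eqref{eqn:cascade}, $\xhat_{k}\in[x_{k}]^{+}_{r(k)}$ where $r(k):=\min(k-1,\,p-2)$.}

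Before running the induction I would record two preliminaries, both obtained by straightforward induction from $[x]_{0}\ni x$ together with $x_{k-1}\in f^{-1}(x_{k})$ and the definition of $[\,\cdot\,]^{+}_{k}$: (i) $x_{k}\in[x_{k}]_{j}$ for all $j\le k$; (ii) $x_{k}\in[x_{k}]^{+}_{j}$ for all $j\le k-1$. These let me switch on Assumption~\ref{assume:invariance} in the non-trivial direction.

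The base case $k=0$ is trivial. For the step, assume $\xhat_{k}\in[x_{k}]^{+}_{r(k)}$. Because $x_{k}\in[x_{k}]^{+}_{r(k)}$ as well by (ii), Assumption~\ref{assume:invariance} gives $[\xhat_{k}]^{+}_{r(k)}=[x_{k}]^{+}_{r(k)}\ni x_{k}$, so $x_{k}\in[\xhat_{k}]^{+}_{r(k)}\cap h^{-1}(y_{k})$, forcing $m_{k}\ge r(k)$. Pick $z\in[\xhat_{k}]^{+}_{m_{k}}\cap h^{-1}(y_{k})$ with $\xhat_{k+1}=f(z)$, as permitted by \eqref{eqn:deadbeat}. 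Remark~\ref{rem:subset} yields $[\xhat_{k}]^{+}_{m_{k}}\subset[\xhat_{k}]^{+}_{r(k)}$, so $z\in[\xhat_{k}]^{+}_{r(k)}$; a second application of Assumption~\ref{assume:invariance} then gives $[z]^{+}_{r(k)}=[\xhat_{k}]^{+}_{r(k)}=[x_{k}]^{+}_{r(k)}$, hence $z\in[x_{k}]^{+}_{r(k)}$. Combined with $z\in h^{-1}(y_{k})=[x_{k}]_{0}$, this places $z\in[x_{k}]_{r(k)+1}$. Since $x_{k}\in f^{-1}(x_{k+1})$ we have $[x_{k}]_{r(k)+1}\subset[f^{-1}(x_{k+1})]_{r(k)+1}$, whence $\xhat_{k+1}=f(z)\in[x_{k+1}]^{+}_{r(k)+1}$. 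Because $r(k+1)\le r(k)+1$, a final appeal to Remark~\ref{rem:subset} gives $\xhat_{k+1}\in[x_{k+1}]^{+}_{r(k+1)}$, closing the induction.

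For $k\ge p-1$ the claim delivers $\xhat_{k}\in[x_{k}]^{+}_{p-2}$, and the same reasoning pins $m_{k}=p-2$. Therefore
\[
\xhat_{k+1}\in f\!\left([\xhat_{k}]^{+}_{p-2}\cap h^{-1}(y_{k})\right)=f\!\left([x_{k}]^{+}_{p-2}\cap[x_{k}]_{0}\right)=f\!\left([x_{k}]_{p-1}\right).
\]
Preliminaries (i)--(ii) show $x_{k}\in[x_{k}]_{p-1}$, so this set is non-empty and hence, by Assumption~\ref{assume:singleton}, equal to $\{x_{k}\}$. Consequently $\xhat_{k+1}=f(x_{k})=x_{k+1}$, and iterating gives $\xhat_{k}=x_{k}$ for every $k\ge p$, which is the deadbeat condition.

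The chief obstacle is shuttling set memberships through Assumption~\ref{assume:invariance} in the inductive step: one must first use it at the unknown level $m_{k}$, drop back to the known level $r(k)$ via the nesting of Remark~\ref{rem:subset}, and then apply the assumption a second time to transport $z$ from $[\xhat_{k}]^{+}_{r(k)}$ into $[x_{k}]^{+}_{r(k)}$ before pushing through $f$. The auxiliary facts (i)--(ii)---that $x_{k}$ is itself a member of its own forward and backward classes up to the appropriate level---are indispensable here; without them Assumption~\ref{assume:invariance} cannot be turned on and the final singleton collapse cannot be invoked. Beyond this careful bookkeeping, the remainder is routine manipulation of the defining identities.
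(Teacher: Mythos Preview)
Your proof is correct and follows essentially the same route as the paper's: both arguments show inductively that $\psi(k,\xhat,x)\in[\phi(k,x)]^{+}_{\ell}$ for an index $\ell$ that climbs to $p-2$, using Assumption~\ref{assume:invariance} to swap $[\xhat]^{+}$ for $[x]^{+}$, Remark~\ref{rem:subset} to absorb the maximiser $\pi$, and Assumption~\ref{assume:singleton} to collapse $[x_{k}]_{p-1}$ to a point. The only cosmetic differences are that the paper packages the inductive step as an abstract implication $\xhat\in[x]^{+}_{\ell-1}\Rightarrow\xhat^{+}\in[f(x)]^{+}_{\ell}$ before iterating, whereas you run the induction directly on time; and you are more explicit than the paper in recording the auxiliary facts (i)--(ii) (which the paper uses tacitly when it asserts $[x]^{+}_{\ell-1}\cap[x]_{0}\neq\emptyset$). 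Your ``second application'' of Assumption~\ref{assume:invariance} is redundant---once $[\xhat_{k}]^{+}_{r(k)}=[x_{k}]^{+}_{r(k)}$ is established, $z\in[\xhat_{k}]^{+}_{r(k)}$ already gives $z\in[x_{k}]^{+}_{r(k)}$---but this is harmless.
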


\begin{proof}
We claim the following
\begin{eqnarray}\label{eqn:claim}
\xhat\in [x]_{\ell-1}^{+}\implies \xhat^{+}\in[f(x)]_{\ell}^{+}
\end{eqnarray}
for all $\ell\in\{0,\,1,\,\ldots,\,p-1\}$. Let us prove our claim.
Note that $\xhat\in[x]_{\ell-1}^{+}$ yields
$[\xhat]_{\ell-1}^{+}=[x]_{\ell-1}^{+}$ by
Assumption~\ref{assume:invariance}. Since
$[x]_{\ell-1}^{+}\neq\emptyset$ we have
$[x]_{\ell-1}^{+}\cap[x]_{0}\neq\emptyset$ and, consequently,
$[\xhat]_{\ell-1}^{+}\cap[x]_{0}\neq\emptyset$.
Remark~\ref{rem:subset} then yields
$[\xhat]_{\pi(\xhat,\,h(x))}^{+}\subset[\xhat]_{\ell-1}^{+}$.
Starting from \eqref{eqn:deadbeat} we can proceed as
\begin{eqnarray}\label{eqn:yesnumber}
\hat{x}^{+}
&\in& f([\xhat]^{+}_{\pi(\xhat,\,y)}\cap h^{-1}(y))\nonumber\\
&=& f([\xhat]^{+}_{\pi(\xhat,\,h(x))}\cap h^{-1}(h(x)))\nonumber\\
&\subset& f([\xhat]^{+}_{\ell-1}\cap [x]_{0})\nonumber\\
&=& f([x]^{+}_{\ell-1}\cap [x]_{0})\nonumber\\
&=& f([x]_{\ell})\\
&\subset& f([f^{-1}(f(x))]_{\ell})\nonumber\\
&=& [f(x)]^{+}_{\ell}\,.\nonumber
\end{eqnarray}
Hence \eqref{eqn:claim} holds. In particular,
\eqref{eqn:yesnumber} gives us
\begin{eqnarray}\label{eqn:genco}
\xhat\in [x]_{\ell-1}^{+}\implies \xhat^{+}\in f([x]_{\ell})
\end{eqnarray}
for all $\ell\in\{0,\,1,\,\ldots,\,p-1\}$. Note that $\xhat\in
[x]^{+}_{-1}$ holds for all $x,\,\xhat$. Therefore
\eqref{eqn:claim} and Remark~\ref{rem:subset} imply the existence
of $\ell^{\ast}\in\{0,\,1,\,\ldots,\,p-1\}$ such that
\begin{eqnarray}\label{eqn:dede}
\psi(k,\,\xhat,\,x)\in[\phi(k,\,x)]^{+}_{p-2}
\end{eqnarray}
for all $k\geq\ell^{\ast}$. Also,
Assumption~\ref{assume:singleton} yields us
\begin{eqnarray}\label{eqn:ibo}
[\phi(k,\,x)]_{p-1}=\phi(k,\,x)
\end{eqnarray}
for all $k\geq p-1$. Combining \eqref{eqn:genco},
\eqref{eqn:dede}, and \eqref{eqn:ibo} we can write
\begin{eqnarray*}
\psi(k,\,\xhat,\,x)=\phi(k,\,x)
\end{eqnarray*}
for all $k\geq p$. Hence the result.
\end{proof}

\begin{corollary}\label{cor:foralg}
Consider linear system~\eqref{eqn:linsystem} with
$A\in\Real^{n\times n}$ and $C\in\Real^{1\times n}$. Suppose pair
$(C,\,A)$ is observable\footnote{That is, $\mbox{rank}\ [C^{T}\
A^{T}C^{T}\ \ldots\ A^{(n-1)T}C^{T}]=n$.}. Let $\setS_{k}$ for
$k=0,\,1,\,\ldots$ be defined as in Lemma~\ref{lem:subspace}. Then
system
\begin{eqnarray*}
\xhat^{+}=A((\xhat+A\setS_{n-2})\cap(x+\setS_{0}))
\end{eqnarray*}
is a deadbeat observer for system~\eqref{eqn:linsystem}.
\end{corollary}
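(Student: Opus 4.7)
The strategy is to reduce the claim to Theorem~\ref{thm:main}. The corollary's formula is not a pointwise instance of~\eqref{eqn:deadbeat}---it always uses the index $n-2$ rather than $\pi(\xhat,y)$---so I would proceed in two phases: match the two dynamics on $\R(A^{n-1})$, and show that every trajectory of the corollary enters $\R(A^{n-1})$ within $n-1$ iterations, after which Theorem~\ref{thm:main} takes over.

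First I would verify the hypotheses of Theorem~\ref{thm:main} with $p=n$. Observability implies~\eqref{eqn:PBH} for every $\lambda\in\Complex$, in particular for $\lambda\neq 0$, so the system is deadbeat observable and Assumption~\ref{assume:singleton} holds by the theorem equating deadbeat observability with that assumption in the linear case. Since the output is scalar, $\dim\setS_{0}=n-1$, and the argument in the proof of Lemma~\ref{lem:subspace} shows $\dim\setS_{k+1}<\dim\setS_{k}$ whenever $\setS_{k}\neq\{0\}$; hence $\setS_{n-1}=\{0\}$ and $p=n$ works. Assumption~\ref{assume:invariance} is free for linear systems.

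For the match on $\R(A^{n-1})$ I would invoke the intermediate computation in the proof of that equivalence theorem, which yields $[\xhat]_{n-2}^{+}=\xhat+A\setS_{n-2}$ whenever $\xhat\in\R(A^{n-1})$, and observe that $h^{-1}(Cx)=x+\setS_{0}$. Since $\setS_{n-1}=A\setS_{n-2}\cap\setS_{0}=\{0\}$ with $\dim A\setS_{n-2}=1$ and $\dim\setS_{0}=n-1$, the subspaces $A\setS_{n-2}$ and $\setS_{0}$ are complementary, so the affine line $\xhat+A\setS_{n-2}$ meets the hyperplane $x+\setS_{0}$ in exactly one point for every $\xhat$ and $x$. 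For $\xhat\in\R(A^{n-1})$ this singleton belongs to $[\xhat]_{n-2}^{+}\cap h^{-1}(Cx)$, forcing $\pi(\xhat,Cx)=n-2$; then~\eqref{eqn:deadbeat} specializes to $\xhat^{+}=A((\xhat+A\setS_{n-2})\cap(x+\setS_{0}))$, precisely the corollary's formula.

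The main obstacle is the transient regime $\xhat\notin\R(A^{n-1})$, where $[\xhat]_{n-2}^{+}=\emptyset$ and the two formulas no longer coincide. I would bypass it by establishing $\setS_{k}\subset\R(A^{k})$ for all $k$---an immediate induction via $\setS_{k+1}=A\setS_{k}\cap\setS_{0}\subset A\R(A^{k})=\R(A^{k+1})$---so that $A\setS_{n-2}\subset\R(A^{n-1})\subset\R(A^{k})$ for every $k\leq n-1$. Consequently, if $\xhat\in\R(A^{k})$ with $k\leq n-1$, the singleton $z\in(\xhat+A\setS_{n-2})\cap(x+\setS_{0})$ lies in $\R(A^{k})$, giving $\xhat^{+}=Az\in\R(A^{k+1})$. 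A routine induction yields $\xhat^{(n-1)}\in\R(A^{n-1})$ from any initial guess, and $\R(A^{n-1})$ is then forward-invariant under the update; from that step onward the corollary's dynamics coincide with~\eqref{eqn:deadbeat}, so applying Theorem~\ref{thm:main} to the shifted trajectory delivers $\psi(k,\xhat,x)=\phi(k,x)$ for all $k\geq 2n-1$, which is the deadbeat property.
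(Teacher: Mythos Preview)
Your argument is sound but takes a longer path than the one the paper intends. You treat the possibility $\pi(\xhat,y)<n-2$ (which occurs when $\xhat\notin\R(A^{n-1})$) as an obstacle requiring a separate transient analysis, at the cost of $n-1$ extra steps in the deadbeat bound. In fact the corollary's update is \emph{always} a valid selection from the set-valued observer~\eqref{eqn:deadbeat}: whatever $\ell:=\pi(\xhat,y)$ turns out to be, one has $\xhat+A\setS_{n-2}\subset[\xhat]^{+}_{\ell}$. Indeed, if $\ell=-1$ then $[\xhat]^{+}_{-1}=\X$; if $\ell\geq 0$ then $[\xhat]^{+}_{\ell}\neq\emptyset$ forces $\xhat\in\R(A^{\ell+1})$, whence $[\xhat]^{+}_{\ell}=\xhat+A\setS_{\ell}\supset\xhat+A\setS_{n-2}$ via Remark~\ref{rem:subset} (i.e.\ $\setS_{n-2}\subset\setS_{\ell}$). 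Thus the singleton $(\xhat+A\setS_{n-2})\cap(x+\setS_{0})$ lies in $[\xhat]^{+}_{\pi(\xhat,y)}\cap h^{-1}(y)$ for every $\xhat,x$, and Theorem~\ref{thm:main} applies directly with $p=n$, with no detour through $\R(A^{n-1})$. This is the immediate-corollary reading the paper has in mind.

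One small point you state without proof: $\dim A\setS_{n-2}=1$. This is where full observability (rather than mere deadbeat observability) enters, via PBH at $\lambda=0$: if $A\setS_{n-2}=\{0\}$ then the one-dimensional space $\setS_{n-2}$ would lie in $\N(A)\cap\N(C)=\{0\}$, a contradiction. Without this step the affine intersection in the corollary could be empty, so it deserves a line.
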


\section{System with input}\label{sec:input}

In this section we look at the case where the evolution of system
to be observed is dependent not only on the initial condition but
also on some exogenous signal, which we call the input. To
construct a deadbeat observer for such system we again make use of
sets.

Consider the system
\begin{subeqnarray}\label{eqn:systemwu}
x^{+}&=&f(x,\,u)\\
y&=&h(x)
\end{subeqnarray}
where $u\in\U\subset\Real^{q}$ is the {\em input} or some known
{\em disturbance} (e.g. time). Let ${\bf
u}=(u_{0},\,u_{1},\,\ldots)$, $u_{k}\in\U$, denote an input
sequence. The {\em solution} of system~\eqref{eqn:systemwu} at
time $k$, starting at initial condition $x\in\X$, and having
evolved under the influence of input sequence $\yu$ is denoted by
$\phi(k,\,x,\,\yu)$. Note that $\phi(0,\,x,\,\yu)=x$ and
$\phi(k+1,\,x,\,\yu)=f(\phi(k,\,x,\,\yu),\,u_{k})$ for all $x$,
$\yu$, and $k$.

Now consider the following cascade system
\begin{subeqnarray}
x^{+}&=&f(x,\,u)\\
\xhat^{+}&\in&g(\xhat,\,h(x),\,u)\label{eqn:cascadewu}
\end{subeqnarray}
We denote a solution of subsystem~(\ref{eqn:cascadewu}b) by
$\psi(k,\,\xhat,\,x,\,\yu)$. We then have
$\psi(0,\,\xhat,\,x,\,\yu)=\xhat$ and
$\psi(k+1,\,\xhat,\,x,\,\yu)\in
g(\psi(k,\,\xhat,\,x,\,\yu),\,h(\phi(k,\,x,\,\yu)),\,u_{k})$ for
all $x$, $\xhat$, $\yu$, and $k$.

\begin{definition}
Given $g:\X\times\Y\times\U\rightrightarrows\X$, system
\begin{eqnarray*}
\xhat^{+}\in g(\xhat,\,y,\,u)
\end{eqnarray*}
is said to be a {\em deadbeat observer for
system~\eqref{eqn:systemwu}} if there exists $p\geq 1$ such that
solutions of system~\eqref{eqn:cascadewu} satisfy
\begin{eqnarray*}
\psi(k,\,\xhat,\,x,\,\yu)=\phi(k,\,x,\,\yu)
\end{eqnarray*}
for all $x$, $\xhat$, $\yu$, and $k\geq p$.
\end{definition}

How to define sets $[x]_{k}$ and $[x]_{k}^{+}$ for
system~\eqref{eqn:systemwu} is obvious. We again let
\begin{eqnarray*}
[x]_{0}:=h^{-1}(h(x))\,.
\end{eqnarray*}
and (for $k=0,\,1,\,\ldots$)
\begin{eqnarray*}
[x]_{k+1}:=[x]_{k}^{+}\cap[x]_{0}
\end{eqnarray*}
this time with
\begin{eqnarray*}
[x]_{k}^{+}:=\bigcup_{f(\eta,\,u)=x} f([\eta]_{k},\,u)\,.
\end{eqnarray*}
The following result is a generalization of
Theorem~\ref{thm:main}. (The demonstration is parallel to that of
Theorem~\ref{thm:main} and hence omitted.)
\begin{theorem}\label{thm:mainu}
Suppose Assumptions~\ref{assume:singleton}-\ref{assume:invariance}
hold. Then system
\begin{eqnarray*}
\hat{x}^{+}\in f([\xhat]^{+}_{\pi(\xhat,\,y)}\cap h^{-1}(y),\,u)
\end{eqnarray*}
is a deadbeat observer for system~\eqref{eqn:systemwu}.
\end{theorem}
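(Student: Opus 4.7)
The plan is to follow the proof of Theorem~\ref{thm:main} essentially verbatim, replacing $f(\cdot)$ with $f(\cdot,\,u)$ throughout and accommodating the new definition of $[x]^+_k$ as a union over all predecessor pairs. The central claim to establish is the input-case analogue of \eqref{eqn:claim}: for each $\ell\in\{0,\,1,\,\ldots,\,p-1\}$, each $u\in\U$, and each $x,\,\xhat\in\X$,
\begin{eqnarray*}
\xhat\in[x]^+_{\ell-1}\implies\xhat^+\in[f(x,\,u)]^+_\ell
\end{eqnarray*}
whenever $\xhat^+$ satisfies the observer recursion at measurement $y=h(x)$ and input $u$.

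To derive this claim I would mirror \eqref{eqn:yesnumber}. From $\xhat\in[x]^+_{\ell-1}$, Assumption~\ref{assume:invariance} yields $[\xhat]^+_{\ell-1}=[x]^+_{\ell-1}$, so $[\xhat]^+_{\ell-1}\cap[x]_0\neq\emptyset$; Remark~\ref{rem:subset} then forces $[\xhat]^+_{\pi(\xhat,\,h(x))}\subset[\xhat]^+_{\ell-1}$. Plugging this into the observer update gives
\begin{eqnarray*}
\xhat^+
&\in& f([\xhat]^+_{\pi(\xhat,\,y)}\cap h^{-1}(y),\,u)\\
&\subset& f([\xhat]^+_{\ell-1}\cap[x]_0,\,u)\\
&=& f([x]^+_{\ell-1}\cap[x]_0,\,u)\\
&=& f([x]_\ell,\,u)
\end{eqnarray*}
and one is left with the single step $f([x]_\ell,\,u)\subset[f(x,\,u)]^+_\ell$. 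This is the one place where the proof differs from the no-input case: by the new definition,
\begin{eqnarray*}
[f(x,\,u)]^+_\ell=\bigcup_{f(\eta,\,u')=f(x,\,u)}f([\eta]_\ell,\,u'),
\end{eqnarray*}
and the admissible choice $(\eta,\,u')=(x,\,u)$ puts $f([x]_\ell,\,u)$ inside the union.

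With the claim in hand, the conclusion is assembled exactly as in the original proof. The trivial membership $\xhat\in[x]^+_{-1}=\X$ together with iterated application of the claim and Remark~\ref{rem:subset} produces some $\ell^\ast\in\{0,\,1,\,\ldots,\,p-1\}$ such that $\psi(k,\,\xhat,\,x,\,\yu)\in[\phi(k,\,x,\,\yu)]^+_{p-2}$ for all $k\geq\ell^\ast$. Assumption~\ref{assume:singleton} then collapses $[\phi(k,\,x,\,\yu)]_{p-1}$ to the singleton $\{\phi(k,\,x,\,\yu)\}$ once $k\geq p-1$, and combining this with the $\ell=p-1$ instance of the claim delivers $\psi(k,\,\xhat,\,x,\,\yu)=\phi(k,\,x,\,\yu)$ for all $k\geq p$. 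The only real obstacle is the bookkeeping around the union in $[\cdot]^+_k$, making sure that every intermediate equality holds as a set equality rather than a mere inclusion; once one sees that $(x,\,u)$ is a canonical summand in the predecessor union, Assumptions~\ref{assume:singleton}--\ref{assume:invariance} carry the argument across unchanged, which is presumably why the author found the demonstration omittable.
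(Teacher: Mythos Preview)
Your proposal is correct and follows precisely the route the paper indicates: the authors omit the proof because it is ``parallel to that of Theorem~\ref{thm:main},'' and you have carried out that parallel argument faithfully, including the one genuinely new ingredient---the observation that $(\eta,\,u')=(x,\,u)$ is an admissible summand in the union defining $[f(x,\,u)]^+_\ell$, which replaces the inclusion $f([x]_\ell)\subset f([f^{-1}(f(x))]_\ell)$ from \eqref{eqn:yesnumber}. There is nothing to add.
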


\section{Examples}\label{sec:ex}

Here, for two third order nonlinear systems, we construct deadbeat
observers. In the first example we study a simple autonomous
homogeneous system and show that the construction yields a
homogeneous observer. Hence our method may be thought to be
somewhat {\em natural} in the vague sense that the observer it
generates inherits certain intrinsic properties of the system. In
the second example we aim to provide a demonstration on observer
construction for a system with input.

\subsection{Homogeneous system}

Consider system~\eqref{eqn:system} with
\begin{eqnarray*}
f(x):=\left[\!\!
\begin{array}{c}
x_{2}\\
x_{3}^{1/3}\\
x_{1}^{3}+x_{2}^{3}
\end{array}\!\!\right]\quad\mbox{and}\quad h(x):=x_{1}
\end{eqnarray*}
where $x=[x_{1}\ x_{2}\ x_{3}]^{T}$. Let $\X=\Real^{3}$ and
$\Y=\Real$. If we let dilation $\Delta_{\lambda}$ be
\begin{eqnarray*}
\Delta_{\lambda}:=\left[\!\!
\begin{array}{ccc}
\lambda&0&0\\
0&\lambda&0\\
0&0&\lambda^{3}
\end{array}\!\!\right]
\end{eqnarray*}
with $\lambda\in\Real$, then we realize that
\begin{eqnarray*}
f(\Delta_{\lambda}x)=\Delta_{\lambda}f(x)\quad\mbox{and}\quad
h(\Delta_{\lambda}x)=\lambda h(x)\,.
\end{eqnarray*}
That is, the system is homogeneous \cite{rinehart09} with respect
to dilation $\Delta$. Before describing the relevant sets
$[x]_{k}$ and $[x]_{k}^{+}$ we want to mention that $f$ is
bijective and its inverse is
\begin{eqnarray}\label{eqn:finv}
f^{-1}(x)=\left[\!\!
\begin{array}{c}
(x_{3}-x_{1}^{3})^{1/3}\\
x_{1}\\
x_{2}^{3}
\end{array}\!\!\right]
\end{eqnarray}
Since $h(x)=x_{1}$ we can write
\begin{eqnarray}\label{eqn:x0}
[x]_{0}=\left\{ \left[\!\!\begin{array}{c} x_{1}\\
\alpha\\
\beta
\end{array}\!\!\right]:\alpha,\,\beta\in\Real
\right\}
\end{eqnarray}
By \eqref{eqn:finv} we can then proceed as
\begin{eqnarray}\label{eqn:x0plus}
[x]_{0}^{+}&=&f([f^{-1}(x)]_{0})\nonumber\\
&=&f\left(\left\{ \left[\!\!\begin{array}{c} (x_{3}-x_{1}^{3})^{1/3}\\
\gamma\\
\delta
\end{array}\!\!\right]:\gamma,\,\delta\in\Real
\right\}\right)\nonumber\\
&=&\left\{ f\left(\left[\!\!\begin{array}{c} (x_{3}-x_{1}^{3})^{1/3}\\
\gamma\\
\delta
\end{array}\!\!\right]\right):\gamma,\,\delta\in\Real
\right\}\nonumber\\
&=&\left\{ \left[\!\!\begin{array}{c} \gamma\\
\delta^{1/3}\\
x_{3}-x_{1}^{3}+\gamma^{3}
\end{array}\!\!\right]:\gamma,\,\delta\in\Real
\right\}
\end{eqnarray}
Recall that $[x]_{1}=[x]_{0}^{+}\cap[x]_{0}$. Therefore
intersecting sets \eqref{eqn:x0} and \eqref{eqn:x0plus} we obtain
\begin{eqnarray*}
[x]_{1}=\left\{ \left[\!\!\begin{array}{c} x_{1}\\
\alpha\\
x_{3}
\end{array}\!\!\right]:\alpha\in\Real
\right\}
\end{eqnarray*}
We can now construct $[x]_{1}^{+}$ as
\begin{eqnarray}\label{eqn:x1plus}
[x]_{1}^{+}&=&f([f^{-1}(x)]_{1})\nonumber\\
&=&f\left(\left\{ \left[\!\!\begin{array}{c} (x_{3}-x_{1}^{3})^{1/3}\\
\gamma\\
x_{2}^{3}
\end{array}\!\!\right]:\gamma\in\Real
\right\}\right)\nonumber\\
&=&\left\{ f\left(\left[\!\!\begin{array}{c} (x_{3}-x_{1}^{3})^{1/3}\\
\gamma\\
x_{2}^{3}
\end{array}\!\!\right]\right):\gamma\in\Real
\right\}\nonumber\\
&=&\left\{ \left[\!\!\begin{array}{c} \gamma\\
x_{2}\\
x_{3}-x_{1}^{3}+\gamma^{3}
\end{array}\!\!\right]:\gamma\in\Real
\right\}
\end{eqnarray}
Now note that sets \eqref{eqn:x0} and \eqref{eqn:x1plus} intersect
at a single point. In particular,
$[x]_{2}=[x]_{1}^{+}\cap[x]_{0}=x$. Therefore
Assumption~\ref{assume:singleton} is satisfied with $p=3$. Observe
also that
\begin{eqnarray*}
[\xhat]_{1}^{+}\cap h^{-1}(y) &=&
\left\{ \left[\!\!\begin{array}{c} \gamma\\
\xhat_{2}\\
\xhat_{3}-\xhat_{1}^{3}+\gamma^{3}
\end{array}\!\!\right]:\gamma\in\Real
\right\}\cap\left\{ \left[\!\!\begin{array}{c} y\\
\alpha\\
\beta
\end{array}\!\!\right]:\alpha,\,\beta\in\Real
\right\}\\
&=&\left[\!\!\begin{array}{c} y\\
\xhat_{2}\\
\xhat_{3}-\xhat_{1}^{3}+y^{3}
\end{array}\!\!\right]
\end{eqnarray*}
which means that $\pi(\xhat,\,y)=p-2=1$ for all $\xhat$ and $y$.
The dynamics of the deadbeat observer then read
\begin{eqnarray*}
\xhat^{+}&=&f([\xhat]_{1}^{+}\cap h^{-1}(y))\\
&=&\left[\!\!\begin{array}{c}
\xhat_{2}\\
(\xhat_{3}-\xhat_{1}^{3}+y^{3})^{1/3}\\
\xhat_{2}^{3}+y^{3}
\end{array}\!\!\right]
\end{eqnarray*}
We finally notice that
\begin{eqnarray*}
f([\Delta_{\lambda}\xhat]_{1}^{+}\cap h^{-1}(\lambda
y))=\Delta_{\lambda}f([\xhat]_{1}^{+}\cap h^{-1}(y))\,.
\end{eqnarray*}
That is, the deadbeat observer also is homogeneous with respect to
dilation $\Delta$.

\subsection{System with input}
Our second example is again a third order system, this time
however with an input. Consider system~\eqref{eqn:systemwu} with
\begin{eqnarray*}
f(x,\,u):=\left[\!\!
\begin{array}{c}
x_{1}x_{2}x_{3}\\
x_{3}/x_{1}\\
\sqrt{x_{1}x_{2}u}
\end{array}\!\!\right]
\quad\mbox{and}\quad h(x):=x_{1}\,.
\end{eqnarray*}
Let $\X=\Real_{>0}^{3}$, $\Y=\Real_{>0}$, and $\U=\Real_{>0}$. Let
us construct the relevant sets $[x]_{k}$ and $[x]_{k}^{+}$. We
begin with $[x]_{0}$.
\begin{eqnarray}\label{eqn:x0nd}
[x]_{0}=\left\{ \left[\!\!\begin{array}{c} x_{1}\\
\alpha\\
\beta
\end{array}\!\!\right]:\alpha,\,\beta>0
\right\}
\end{eqnarray}
Note that $f$ satisfies the following
\begin{eqnarray*}
f\left(\left[\!\!
\begin{array}{c}
x_{1}u/(x_{2}x_{3}^{2})\\
x_{2}x_{3}^{4}/(x_{1}u^{2})\\
x_{1}u/x_{3}^{2}
\end{array}\!\!\right],\,u\right)=x
\end{eqnarray*}
for all $x$ and $u$. Hence we can write
\begin{eqnarray}\label{eqn:x0plusnd}
[x]_{0}^{+}&=&\bigcup_{u\in\U}f\left(\left[\!\!
\begin{array}{c}
x_{1}u/(x_{2}x_{3}^{2})\\
x_{2}x_{3}^{4}/(x_{1}u^{2})\\
x_{1}u/x_{3}^{2}
\end{array}\!\!\right]_{0},\,u\right)\nonumber\\
&=&\bigcup_{u\in\U}f\left(\left\{ \left[\!\!\begin{array}{c} x_{1}u/(x_{2}x_{3}^{2})\\
\gamma\\
\delta
\end{array}\!\!\right]:\gamma,\,\delta>0
\right\},\,u\right)\nonumber\\
&=&\bigcup_{u\in\U}f\left(\left\{ \left[\!\!\begin{array}{c} x_{1}u/(x_{2}x_{3}^{2})\\
x_{2}x_{3}^{4}\gamma/(x_{1}u^{2})\\
x_{1}u\delta/x_{3}^{2}
\end{array}\!\!\right]:\gamma,\,\delta>0
\right\},\,u\right)\nonumber\\
&=&\bigcup_{u\in\U}\left\{ f\left(\left[\!\!\begin{array}{c} x_{1}u/(x_{2}x_{3}^{2})\\
x_{2}x_{3}^{4}\gamma/(x_{1}u^{2})\\
x_{1}u\delta/x_{3}^{2}
\end{array}\!\!\right],\,u\right):\gamma,\,\delta>0
\right\}\nonumber\\
&=&\left\{\left[\!\!\begin{array}{c} x_{1}\gamma\delta\\
x_{2}\delta\\
x_{3}\sqrt{\gamma}
\end{array}\!\!\right] :\gamma,\,\delta>0
\right\}
\end{eqnarray}
Since $[x]_{1}=[x]_{0}^{+}\cap[x]_{0}$, intersecting sets
\eqref{eqn:x0nd} and \eqref{eqn:x0plusnd} we obtain
\begin{eqnarray*}
[x]_{1}=\left\{ \left[\!\!\begin{array}{c} x_{1}\\
x_{2}/\alpha^{2}\\
x_{3}\alpha
\end{array}\!\!\right]:\alpha>0
\right\}
\end{eqnarray*}
We can now construct $[x]_{1}^{+}$ as
\begin{eqnarray}\label{eqn:x1plusnd}
[x]_{1}^{+}&=&\bigcup_{u\in\U}f\left(\left[\!\!
\begin{array}{c}
x_{1}u/(x_{2}x_{3}^{2})\\
x_{2}x_{3}^{4}/(x_{1}u^{2})\\
x_{1}u/x_{3}^{2}
\end{array}\!\!\right]_{1},\,u\right)\nonumber\\
&=&\bigcup_{u\in\U}f\left(\left\{ \left[\!\!\begin{array}{c} x_{1}u/(x_{2}x_{3}^{2})\\
x_{2}x_{3}^{4}/(x_{1}u^{2}\gamma^{2})\\
x_{1}u\gamma/x_{3}^{2}
\end{array}\!\!\right]:\gamma>0
\right\},\,u\right)\nonumber\\
&=&\left\{\left[\!\!\begin{array}{c} x_{1}/\gamma\\
x_{2}\gamma\\
x_{3}/\gamma
\end{array}\!\!\right] :\gamma>0
\right\}
\end{eqnarray}
Now note that sets \eqref{eqn:x0nd} and \eqref{eqn:x1plusnd}
intersect at a single point. In particular,
$[x]_{2}=[x]_{1}^{+}\cap[x]_{0}=x$. Therefore
Assumption~\ref{assume:singleton} is satisfied with $p=3$. Observe
also that
\begin{eqnarray*}
[\xhat]_{1}^{+}\cap h^{-1}(y) &=& \left\{\left[\!\!\begin{array}{c} \xhat_{1}/\gamma\\
\xhat_{2}\gamma\\
\xhat_{3}/\gamma
\end{array}\!\!\right] :\gamma>0
\right\}\cap \left\{ \left[\!\!\begin{array}{c} y\\
\alpha\\
\beta
\end{array}\!\!\right]:\alpha,\,\beta>0
\right\}\\ &=&
\left[\!\!\begin{array}{c} y\\
\xhat_{1}\xhat_{2}/y\\
\xhat_{3}y/\xhat_{1}
\end{array}\!\!\right]
\end{eqnarray*}
which means that $\pi(\xhat,\,y)=p-2=1$ for all $\xhat$ and $y$.
The dynamics of the deadbeat observer then read
\begin{eqnarray*}
\xhat^{+}&=&f([\xhat]_{1}^{+}\cap h^{-1}(y),\,u)\\
&=&\left[\!\!\begin{array}{c}
\xhat_{2}\xhat_{3}y\\
\xhat_{3}/\xhat_{1}\\
\sqrt{\xhat_{1}\xhat_{2}u}
\end{array}\!\!\right]
\end{eqnarray*}

\section{An algorithm for deadbeat gain}\label{sec:alg}

In this section we provide an algorithm to compute the deadbeat
observer gain for a linear system with scalar output. (The
algorithm directly follows from Corollary~\ref{cor:foralg}.)
Namely, given an observable pair $(C,\,A)$ with
$C\in\Real^{1\times n}$ and $A\in\Real^{n\times n}$, we provide a
procedure to compute the gain $L\in\Real^{n\times 1}$ that renders
matrix $A-LC$ nilpotent. Below we let $\nal(\cdot)$ be some
function such that, given matrix $M\in\Real^{m\times n}$ whose
dimension of null space is $k$, $\nal(M)$ is some $n\times k$
matrix whose columns span the null space of $M$.

\begin{algorithm}\label{alg:db}
Given $C\in\Real^{1\times n}$ and $A\in\Real^{n\times n}$, the
following algorithm generates deadbeat gain $L\in\Real^{n\times
1}$.
\begin{eqnarray*}
&& X = \nal(C)\\
&& \mbox{{\bf for}}\quad i = 1:n-2\\
&& \qquad X = \nal\left(\left[
\begin{array}{c}
C\\ \nal((AX)^{T})^{T}
\end{array}
\right]\right)\\
&&\mbox{{\bf end}}\\
&&L_{\rm pre}=AX\\
&&L = \frac{AL_{\rm pre}}{CL_{\rm pre}}
\end{eqnarray*}
\end{algorithm}

For the interested reader we below give a M{\small ATLAB} code.
Exploiting Algorithm~\ref{alg:db}, this code generates a function
(which we named {\tt dbLfun}) whose inputs are matrices $C$ and
$A$. The output of the function, as its name indicates, is the
deadbeat gain $L$.
\begin{verbatim}
function L = dbLfun(C,A)
X = null(C);
for i = 1:length(A)-2
    X = null([C;null((A*X)')']);
end
Lpre = A*X;
L = A*Lpre/(C*Lpre);
\end{verbatim}
One can also use the built-in M{\small ATLAB} function {\tt acker}
to compute the deadbeat gain. We can therefore compare {\tt
dbLfun} with {\tt acker} via a numerical experiment.
Table~\ref{tab:one} gives the experimental results. Number $n$ is
the dimension of the system (that is, the number of columns of $A$
matrix) and the numbers at the bottom row are the percentages of
the cases (among $10^4$ random trials for each $n$) in which {\tt
dbLfun} performed better than {\tt acker}. How we determine which
one is better in a given case is as follows. Given pair $(C,\,A)$,
we let $L_{1}$ be the gain resulting from {\tt dbLfun(C,A)} and
$L_{2}$ be the gain given by {\tt acker(A',C',zeros(n,1))'}. Then
we compare norms $|(A-L_{1}C)^n|$ and $|(A-L_{2}C)^n|$, neither of
which is zero due to round-off errors. The function yielding the
smaller norm is considered to be better.

\begin{table}\caption{Percentages of cases where {\tt dbLfun} performed better than {\tt acker}.}
\begin{center}\label{tab:one}
\begin{tabular}{|c|c|c|c|c|c|c|c|}
\hline
n=3 & n=4 & n=5 & n=6 & n=7 & n=8 & n=9 & n=10\\
\hline \%51 & \%60 & \%67 & \%74 & \%80 & \%85 & \%87 & \%91\\
\hline
\end{tabular}
\end{center}
\end{table}

\section{Conclusion}

For nonlinear systems a method to construct a deadbeat observer is
proposed. The resultant observer can be considered as a
generalization of the linear deadbeat observer. The construction
makes use of sets that are generated iteratively. Through such
iterations, observers are derived for two academic examples. Also,
for computing the deadbeat gain for a linear system with scalar
output, an algorithm that works no worse than an already existing
one is given.

\bibliographystyle{plain}
\bibliography{references}

\begin{thebibliography}{10}

\bibitem{besancon00}
G.~Besan\c{c}on.
\newblock Remarks on nonlinear adaptive observer design.
\newblock {\em Systems \& Control Letters}, 41(4):271--280, 2000.

\bibitem{franklin82}
A.~Emami-Naeini and G.F. Franklin.
\newblock Deadbeat control and tracking of discrete-time systems.
\newblock {\em IEEE Transactions on Automatic Control}, 27(1):176--181, 1982.

\bibitem{fuhrmann06}
P.A. Fuhrmann and J.~Trumpf.
\newblock On observability subspaces.
\newblock {\em International Journal of Control}, 79(10):1157--1195, 2006.

\bibitem{glad83}
S.T. Glad.
\newblock Observability and nonlinear dead beat observers.
\newblock In {\em Proc. of the 22nd IEEE Conference on Decision and Control},
  pages 800--802, 1983.

\bibitem{karafyllis07}
I.~Karafyllis and C.~Kravaris.
\newblock On the observer problem for discrete-time control systems.
\newblock {\em IEEE Transactions on Automatic Control}, 52(1):12--25, 2007.

\bibitem{lax96}
P.D. Lax.
\newblock {\em Linear Algebra}.
\newblock Wiley-Interscience, 1996.

\bibitem{lewis82}
F.L. Lewis.
\newblock A general {R}iccati equation solution to the deadbeat control
  problem.
\newblock {\em IEEE Transactions on Automatic Control}, 27(1):186--188, 1982.

\bibitem{luenberger64}
D.G. Luenberger.
\newblock Observing the state of a linear system.
\newblock {\em IEEE Transactions on Military Electronics}, 8(2):74--80, 1964.

\bibitem{moraal95}
P.E. Moraal and J.W. Grizzle.
\newblock Observer design for nonlinear systems with discrete-time
  measurements.
\newblock {\em IEEE Transactions on Automatic Control}, 40(3):395--404, 1995.

\bibitem{rinehart09}
M.~Rinehart, M.~Dahleh, and I.~Kolmanovsky.
\newblock Value iteration for (switched) homogeneous systems.
\newblock {\em IEEE Transactions on Automatic Control}, 54(6):1290--1294, 2009.

\bibitem{shamma99}
J.S. Shamma and K.-Y. Tu.
\newblock Set-valued observers and optimal disturbance rejection.
\newblock {\em IEEE Transactions on Automatic Control}, 44(2):253--264, 1999.

\bibitem{sugimoto93}
K.~Sugimoto, A.~Inoue, and S.~Masuda.
\newblock A direct computation of state deadbeat feedback gains.
\newblock {\em IEEE Transactions on Automatic Control}, 38(8):1283--1284, 1993.

\bibitem{valcher99}
M.E. Valcher and J.C. Willems.
\newblock Dead beat observer synthesis.
\newblock {\em Systems \& Control Letters}, 37(5):285--292, 1999.

\bibitem{wong04}
K.C. Wong and W.M. Wonham.
\newblock On the computation of observers in discrete-event systems.
\newblock {\em Discrete Event Dynamic Systems - Theory and Applications},
  14(1):55--107, 2004.

\end{thebibliography}
\end{document}